\DeclareMathOperator{\bmin}{\mathbf{min}}
\DeclareMathOperator{\bmax}{\mathbf{max}}
\DeclareMathOperator{\precdot}{\prec\!\!\cdot}
\newtheorem{theorem}{Theorem}
\newtheorem{corollary}[theorem]{Corollary}
\newtheorem{example}[theorem]{Example}
\numberwithin{equation}{section} \numberwithin{theorem}{section}
\begin{document}

\title{Pattern Recognition on Oriented Matroids: Topes and Critical Committees}

\author{Andrey O. Matveev}
\email{andrey.o.matveev@gmail.com}

\keywords{Critical committee, oriented matroid, tope, tope graph, tope poset.}
\thanks{2010 {\em Mathematics Subject Classification}: 52C40.}

\begin{abstract}
Let the sign components of the maximal covectors of a simple oriented matroid $\mathcal{M}$ be represented by the real numbers $-1$ and $1$. Consider the vertex set $\mathfrak{V}(\boldsymbol{R})$ of a symmetric cycle $\boldsymbol{R}$ of adjacent topes in the tope graph of $\mathcal{M}$ as a subposet of the tope poset
of $\mathcal{M}$. If $B$ is the bottom element of the tope poset then $B$ is equal to the unweighted sum of the members of the set $\bmin\mathfrak{V}(\boldsymbol{R})$ of minimal elements of the subposet $\mathfrak{V}(\boldsymbol{R})$; if $B$ is the positive tope then the set $\bmin\mathfrak{V}(\boldsymbol{R})$ is a critical tope committee for the acyclic oriented matroid $\mathcal{M}$.
\end{abstract}

\maketitle

\pagestyle{myheadings}

\markboth{PATTERN RECOGNITION ON ORIENTED MATROIDS}{A.O.~MATVEEV}

\thispagestyle{empty}

\tableofcontents

\section{Introduction}

Let $\mathcal{M}:=(E_t,\mathcal{T})$ be an oriented matroid, of rank $\geq 2$, on the
ground set~$E_t$ $:=\{1,\ldots,t\}$, with set of topes $\mathcal{T}$; throughout we will suppose that it is {\em simple}, that is, it contains no loops, parallel or {\sl antiparallel\/} elements. In this paper the sign components $-$ and $+$ of maximal covectors are replaced by the real numbers $-1$ and $1$, respectively. The topes $T:=(T(1),\ldots,T(t))\in\mathcal{T}$ are interpreted as elements of the real Euclidean space
$\mathbb{R}^t$
of row vectors; if~$T,T',T''\in\mathbb{R}^t$ then
$\langle T',T''\rangle:=\sum_{e=1}^t T'(e)\cdot T''(e)$, and $\|T\|:=\sqrt{\langle T,T\rangle}$.
We\hfill denote\hfill by\hfill $\bigl(\boldsymbol{\sigma}(1),\ldots,\boldsymbol{\sigma}(t)\bigr)$\hfill the\hfill standard\hfill basis\hfill of\hfill $\mathbb{R}^t$,\hfill that\hfill is,\hfill
$\boldsymbol{\sigma}(i)$\\ $:=(0,\ldots,\underset{\overset{\uparrow}{i}}{1},\ldots,0)$, $1\leq i\leq t$. The {\em positive tope\/} $\mathrm{T}^{(+)}$ is the vector $(1,\ldots,1)$ $=\sum_{i=1}^t\boldsymbol{\sigma}(i)$.

If $e\in E_t$ then the corresponding {\em negative\/} and {\em positive halfspaces\/} are the tope subsets~$\mathcal{T}_e^-:=\{T\in\mathcal{T}:\ T(e)=-1\}$ and $\mathcal{T}_e^+:=\{T\in\mathcal{T}:\ T(e)=1\}$, respectively.

If $T\in\mathcal{T}$ and $A\subseteq E_t$ then ${}_{-A}T$ denotes the vector obtained from $T$ by {\em sign reversal} or {\em reorientation\/} on the set $A$: $({}_{-A}T)(e)=-T(e)$ when $e\in A$, and $({}_{-A}T)(e)= T(e)$ when $e\not\in A$. The oriented matroid whose topes are obtained from the topes of $\mathcal{M}$ by reorientation on the set $A$ is denoted by~${}_{-A}\mathcal{M}$.

We denote by $T^-$
the {\em negative part\/} $\{e\in E_t:\ T(e)=-1\}$
of the tope $T$; the {\em positive part\/} $T^+$ of $T$ is the set $\{e\in E_t:\ T(e)=1\}$.

The vertices of the {\em tope graph\/} $\mathcal{T}(\mathcal{L}(\mathcal{M}))$ of the oriented matroid $\mathcal{M}$ are its topes; a pair of topes $\{T',T''\}\subset\mathcal{T}$ is an edge of the graph $\mathcal{T}(\mathcal{L}(\mathcal{M}))$ if the topes $T'$ and $T''$ are {\em adjacent}, that is, they cover some {\em subtope\/} in the {\em
big face lattice\/} of $\mathcal{M}$.
The {\em separation set\/} $\mathbf{S}(T',T'')$ of topes $T'$ and $T''$ is defined by
$\mathbf{S}(T',T''):=\{e\in E_t:\ T'(e)\neq T''(e)\}$. The {\em graph distance\/}~$d(T',T'')$ between the topes $T'$ and $T''$ is the cardinality of the separation set $\mathbf{S}(T',T'')$, see~\cite[Prop.~4.2.3]{BLSWZ}, that is,
\begin{equation*}
d(T',T'')=|\mathbf{S}(T',T'')|=t-\tfrac{1}{4}\|T''+T'\|^2=\tfrac{1}{4}\|T''-T'\|^2=
\tfrac{1}{2}\!\left(t-\langle T'',T'\rangle\right)\; .
\end{equation*}

If $B\in\mathcal{T}$ then we denote by~$\mathcal{T}(\mathcal{L}(\mathcal{M}),B)$ the {\em tope poset\/} of $\mathcal{M}$ based at the tope $B$; by convention, we have $T'\preceq T''$ in $\mathcal{T}(\mathcal{L}(\mathcal{M}),B)$ iff $\mathbf{S}(B,T')$ $\subseteq\mathbf{S}(B,T'')$. If~$\mathcal{X}\subset\mathcal{T}(\mathcal{L}(\mathcal{M}),B)$ then $\bmin\mathcal{X}$
stands for the set of minimal elements of the subposet $\mathcal{X}$.

Let $\mathbf{m}:=(R^0:=B \precdot R^1\precdot\cdots\precdot R^{t-1}\precdot R^t:=-B)$ be a maximal chain in the tope poset
$\mathcal{T}(\mathcal{L}(\mathcal{M}),B)$, and $-\mathbf{m}:=\{-R:\ R\in\mathbf{m}\}$. The union~$\mathfrak{V}(\boldsymbol{R})$\hfill $:=$\hfill $\mathbf{m}\cup-\mathbf{m}$\hfill is\hfill the\hfill vertex\hfill set\hfill of\hfill the\hfill {\em symmetric\hfill cycle}\hfill $\boldsymbol{R}$\\ $:=(R^0:=B,R^1,\ldots,R^{2t-1},R^0)$ in the tope graph $\mathcal{T}(\mathcal{L}(\mathcal{M}))$; by convention, we have $R^{k+t}=-R^k$, $0\leq k\leq t-1$. The subset of topes from $\mathfrak{V}(\boldsymbol{R})$ with inclusion-maximal positive parts is denoted by $\bmax^+(\mathfrak{V}(\boldsymbol{R}))$. If $A\subseteq E_t$ then ${}_{-A}\mathfrak{V}(\boldsymbol{R}):=
\{{}_{-A}R:\ R\in\mathfrak{V}(\boldsymbol{R})\}$.

Let $(\mathfrak{l}_1,\ldots,\mathfrak{l}_t)\in\mathbb{N}^t$ be the sequence defined by
$\{\mathfrak{l}_i\}:=\mathbf{S}(R^{i-1},R^i)$; note that 
$\mathbf{m}-\{-B\}\subseteq\mathcal{T}^-_{\mathfrak{l}_t}$ if $B(\mathfrak{l}_t)=-1$, and $\mathbf{m}-\{-B\}\subseteq\mathcal{T}^+_{\mathfrak{l}_t}$ if $B(\mathfrak{l}_t)=1$.

The chain $\mathbf{m}-\{-B\}$ is a basis of the space $\mathbb{R}^t$; indeed, the square {\em sign matrix}
\begin{equation*}\mathbf{M}:=\mathbf{M}(\boldsymbol{R}):=
\left(
\begin{smallmatrix}
R^0\\R^1\\ \vdots\\ R^{t-2}\\ R^{t-1}
\end{smallmatrix}
\right)\!\in\mathbb{R}^{t\times t}
\end{equation*}
is similar to the nonsingular matrix
\begin{equation*}
\left(
\begin{smallmatrix}
2\cdot B(\mathfrak{l}_1)\cdot\boldsymbol{\sigma}(\mathfrak{l}_1)\\ 2\cdot B(\mathfrak{l}_2)\cdot\boldsymbol{\sigma}(\mathfrak{l}_2)\\ \vdots\\ 2\cdot B(\mathfrak{l}_{t-1})\cdot\boldsymbol{\sigma}(\mathfrak{l}_{t-1})\\ B(\mathfrak{l}_t)\cdot\boldsymbol{\sigma}(\mathfrak{l}_t)
\end{smallmatrix}
\right)\; ;
\end{equation*}
the absolute value of its determinant is $2^{t-1}$.

The $i$th row $(\mathbf{M}^{-1})_i$, $1\leq i\leq t$, of the inverse matrix $\mathbf{M}^{-1}$ of $\mathbf{M}$ is
\begin{equation*}
(\mathbf{M}^{-1})_i=\begin{cases}
\frac{1}{2}\cdot B(i)\cdot(\;\boldsymbol{\sigma}(k)-\boldsymbol{\sigma}(k+1)\;),&\text{if
$i=\mathfrak{l}_k$, $k\neq t$}\; ,\\
\frac{1}{2}\cdot B(i)\cdot(\;\boldsymbol{\sigma}(1)+\boldsymbol{\sigma}(t)\;),&\text{if $i=\mathfrak{l}_t$}\; .
\end{cases}
\end{equation*}
Thus, if $T\in\mathcal{T}$ then we have $T=\boldsymbol{x}\mathbf{M}$
for some row vector
\begin{equation}
\label{eq:4}
\boldsymbol{x}:=(x_1,\ldots,x_t)=T\mathbf{M}^{-1}
\end{equation}
such that
\begin{equation*}
\boldsymbol{x}\in\{-1,0,1\}^t\; .
\end{equation*}

A subset $\mathcal{K}^{\ast}\subset\mathcal{T}$ is called a {\em tope committee\/} for $\mathcal{M}$ if
\begin{equation*}
\sum_{T\in\mathcal{K}^{\ast}}T\geq\mathrm{T}^{(+)}\; ,
\end{equation*}
see~\cite{M-Halfspaces,M-Layers,M-Existence,M-Three,M-Reorientations}.
The committee $\mathcal{K}^{\ast}$ is called {\em minimal\/} if any its proper subset is not a committee for $\mathcal{M}$. If the sum of the members of the minimal tope committee $\mathcal{K}^{\ast}$ is the
positive tope,
\begin{equation*}
\sum_{T\in\mathcal{K}^{\ast}}T=\mathrm{T}^{(+)}\; ,
\end{equation*}
then we say that the committee $\mathcal{K}^{\ast}$ is {\em critical\/}.

Recall that if $\boldsymbol{R}$ is a symmetric cycle in the tope graph $\mathcal{T}(\mathcal{L}(\mathcal{M}))$ of the oriented matroid $\mathcal{M}$, then
for any tope $T\in\mathcal{T}$ there exists a unique inclusion-minimal subset~$\boldsymbol{Q}(T,\boldsymbol{R})\subset\mathfrak{V}(\boldsymbol{R})$ such that
\begin{equation}
\label{eq:2}
T=\sum_{Q\in\boldsymbol{Q}(T,\boldsymbol{R})}Q\; ;
\end{equation}
this set, of odd cardinality, of linearly independent elements of $\mathbb{R}^t$ is
\begin{equation*}
\boldsymbol{Q}(T,\boldsymbol{R})=\{x_i\cdot R^{i-1}:\ x_i\neq 0\}\; ,
\end{equation*}
where the vector $\boldsymbol{x}$ is defined by~(\ref{eq:4}).

As a consequence,
a tope subset $\mathcal{K}^{\ast}\subset\mathcal{T}$ is a committee for $\mathcal{M}$ iff
\begin{equation*}
\sum_{T\in\mathcal{K}^{\ast}}\ \sum_{Q\in\boldsymbol{Q}(T,\boldsymbol{R})}Q\geq\mathrm{T}^{(+)}\; .
\end{equation*}

In Section~\ref{section:1} we discuss relation~(\ref{eq:2}) and describe the structure of the sets $\boldsymbol{Q}(T,\boldsymbol{R})$ in more detail.

\section{Topes and Critical Committees}
\label{section:1}

For a tope $T$ of the oriented matroid $\mathcal{M}$ and for a symmetric cycle $\boldsymbol{R}$ in its tope graph,
we describe the corresponding inclusion-minimal subset~$\boldsymbol{Q}(T,\boldsymbol{R})\subset\mathfrak{V}(\boldsymbol{R})$ from expression~(\ref{eq:2}) in terms of the tope poset of~$\mathcal{M}$. Dual assertions could be made because the tope poset of $\mathcal{M}$ is self-dual in the sense of~\cite[Prop.~4.2.15(ii)]{BLSWZ}.

\begin{theorem}
Let $\boldsymbol{R}:=(R^0,R^1,\ldots,R^{2t-1},R^0)$ be a symmetric cycle in the tope graph $\mathcal{T}(\mathcal{L}(\mathcal{M}))$ of the oriented matroid $\mathcal{M}$. Pick a tope $B\in\mathcal{T}$ and consider the vertex set $\mathfrak{V}(\boldsymbol{R})$ of the cycle $\boldsymbol{R}$ as a subposet of the
tope poset~$\mathcal{T}(\mathcal{L}(\mathcal{M}),B)$ based at $B$.

The set $\boldsymbol{Q}(B,\boldsymbol{R})$ is the set $\bmin\mathfrak{V}(\boldsymbol{R})$ of minimal elements of the subposet~$\mathfrak{V}(\boldsymbol{R})\subset\mathcal{T}(\mathcal{L}(\mathcal{M}),B)$; therefore
\begin{equation}
\label{eq:1}
B=\sum_{Q\in\bmin\mathfrak{V}(\boldsymbol{R})}Q\; .
\end{equation}
\end{theorem}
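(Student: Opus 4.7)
My plan is to compute $\boldsymbol{Q}(B,\boldsymbol{R})$ explicitly from the setup formulas, to identify its members as the ``cyclic local minima'' of $\mathfrak{V}(\boldsymbol{R})$ with respect to the ordering by separation set from $B$, and then to establish that on a symmetric cycle cyclic local minima coincide with the minimal elements of $\mathfrak{V}(\boldsymbol{R})$ inside the tope poset $\mathcal{T}(\mathcal{L}(\mathcal{M}),B)$.

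First I will set $\epsilon_i:=B(\mathfrak{l}_i)\cdot R^0(\mathfrak{l}_i)\in\{-1,+1\}$, so that $\epsilon_i=-1$ iff $\mathfrak{l}_i\in\mathbf{S}(B,R^0)$. Substituting the rows of $\mathbf{M}^{-1}$ into $\boldsymbol{x}=B\mathbf{M}^{-1}$ produces $x_1=\tfrac{1}{2}(\epsilon_1+\epsilon_t)$ and $x_k=\tfrac{1}{2}(\epsilon_k-\epsilon_{k-1})$ for $2\le k\le t$, each lying in $\{-1,0,+1\}$. Writing $\mathbf{S}(B,R^j)=\mathbf{S}(B,R^0)\,\triangle\,\mathbf{S}(R^0,R^j)$ I will then read off that the step $R^{j-1}\to R^j$ for $1\le j\le t$ adds $\mathfrak{l}_j$ to the separation set from $B$ exactly when $\epsilon_j=+1$, while the symmetric step $R^{t+j-1}\to R^{t+j}$ does the opposite. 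A direct case check will match $\{x_k\cdot R^{k-1}:x_k\neq 0\}$ vertex by vertex with the set of cyclic local minima of $\mathfrak{V}(\boldsymbol{R})$, namely those $R^j\in\mathfrak{V}(\boldsymbol{R})$ whose two cyclic neighbours in $\boldsymbol{R}$ both have strictly larger separation set from $B$ than $R^j$ does.

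The conceptual core will be the following Key Lemma: if $R^i,R^j\in\mathfrak{V}(\boldsymbol{R})$ and $\mathbf{S}(B,R^i)\subseteq\mathbf{S}(B,R^j)$, then along the cyclically shorter of the two arcs from $R^i$ to $R^j$ in $\boldsymbol{R}$ every step enlarges the current separation set by one element. The proof will exploit that $\boldsymbol{R}$ is isometrically embedded in the tope graph, so $d(R^i,R^j)=|\mathbf{S}(R^i,R^j)|$ equals the cyclic distance between $i$ and $j$, while the containment hypothesis gives $|\mathbf{S}(R^i,R^j)|=|\mathbf{S}(B,R^j)|-|\mathbf{S}(B,R^i)|$. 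Thus along the shorter arc the net cardinality change equals the number of steps, and since each step toggles the cardinality by $\pm 1$ this forces every step to be an addition.

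Finally I will conclude as follows. A minimal element of $\mathfrak{V}(\boldsymbol{R})$ in the subposet is trivially a cyclic local minimum. Conversely, if $R^k$ is a cyclic local minimum but $R^\ell\prec R^k$ for some $R^\ell\neq R^k$ in $\mathfrak{V}(\boldsymbol{R})$, the Key Lemma supplies an arc from $R^\ell$ to $R^k$ whose final step enlarges the separation set from $B$ — contradicting the fact that both cyclic neighbours of $R^k$ have strictly larger separation set than $R^k$, so every step entering $R^k$ must shrink the separation set. Combined with the first step this yields $\boldsymbol{Q}(B,\boldsymbol{R})=\bmin\mathfrak{V}(\boldsymbol{R})$, and~(\ref{eq:1}) follows from~(\ref{eq:2}). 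The hard part will be the Key Lemma; the identification in the first step reduces to routine sign bookkeeping once that tool is in place.
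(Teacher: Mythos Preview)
Your argument is correct, and it takes a genuinely different route from the paper's own proof. The paper proceeds by reorienting on $B^-$ so that $B$ becomes the positive tope $\mathrm{T}^{(+)}$; in that picture $\mathbf{S}(\mathrm{T}^{(+)},O)=O^-$, so $\bmin\mathfrak{V}(\boldsymbol{R})$ coincides with $\bmax^+\mathfrak{V}(\boldsymbol{R})$ for trivial reasons, and the identity $\sum_{T\in\bmax^+}T=\mathrm{T}^{(+)}$ (hence $\boldsymbol{Q}(\mathrm{T}^{(+)},\boldsymbol{R})=\bmax^+$) is then imported from an earlier paper \cite[Prop.~5.6]{M-Existence}. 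Reorienting back yields~(\ref{eq:1}). By contrast, you stay in the original orientation, evaluate $\boldsymbol{x}=B\mathbf{M}^{-1}$ explicitly to obtain $x_1=\tfrac12(\epsilon_1+\epsilon_t)$ and $x_k=\tfrac12(\epsilon_k-\epsilon_{k-1})$, match the nonzero entries with the cyclic local minima of $|\mathbf{S}(B,\cdot)|$ along $\boldsymbol{R}$, and then promote ``cyclic local minimum'' to ``minimal in the subposet'' via your Key Lemma (which hinges on the fact that the shorter arc in a symmetric $2t$-cycle realises the Hamming distance, so equality of net change and arc length forces every step to be an addition). What the paper's approach buys is brevity, at the cost of a black-box citation; what your approach buys is a fully self-contained proof together with explicit closed-form coefficients $x_k$ and the structural Key Lemma, which in effect re-proves the cited proposition inside the present framework.
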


\begin{proof}
If $B\in\mathfrak{V}(\boldsymbol{R})$, there is nothing to prove. Suppose that $B\not\in\mathfrak{V}(\boldsymbol{R})$, and reorient the items of the poset $\mathcal{T}(\mathcal{L}(\mathcal{M}),B)$ on the
negative part $B^-$ of the tope $B$; in other words, consider the tope poset $\mathcal{T}\bigl(\mathcal{L}({}_{-(B^-)}\mathcal{M}),\mathrm{T}^{(+)}\bigr)$ of the acyclic oriented matroid ${}_{-(B^-)}\mathcal{M}$ based at the positive tope $\mathrm{T}^{(+)}$.

If\hfill $O$\hfill is\hfill a\hfill tope\hfill of\hfill the\hfill oriented\hfill matroid\hfill ${}_{-(B^-)}\mathcal{M}$,\hfill then\hfill the\hfill poset\\ rank~$d(\mathrm{T}^{(+)},O)$ of $O$ in the tope poset $\mathcal{T}\bigl(\mathcal{L}({}_{-(B^-)}\mathcal{M}),\mathrm{T}^{(+)}\bigr)$ of ${}_{-(B^-)}\mathcal{M}$ is equal to the cardinality $|O^-|=|\mathbf{S}(\mathrm{T}^{(+)},O)|$ of the negative part of $O$.

A\hfill tope\hfill $O$\hfill of\hfill the\hfill oriented\hfill matroid\hfill ${}_{-(B^-)}\mathcal{M}$\hfill belongs\hfill to\hfill the\\ set\hfill $\bmax^+\bigl({}_{-(B^-)}\mathfrak{V}(\boldsymbol{R})\bigr)$\hfill iff\hfill
for\hfill the\hfill $2$-path\hfill $(O',O,O'')$,\hfill where\hfill $O'\neq O''$,\hfill in\hfill
the\\ symmetric\hfill cycle\hfill $\bigl({}_{-(B^-)}R^0,$\hfill ${}_{-(B^-)}R^1,$\hfill $\ldots,$\hfill ${}_{-(B^-)}R^{2t-1},$\hfill ${}_{-(B^-)}R^0\bigr)$\hfill we\\ have~$d(\mathrm{T}^{(+)},O')=d(\mathrm{T}^{(+)},O'')=d(\mathrm{T}^{(+)},O)+1$. By~\cite[Prop.~5.6]{M-Existence}, the set~$\bmax^+\bigl({}_{-(B^-)}\mathfrak{V}(\boldsymbol{R})\bigr)=\bmin{}_{-(B^-)}\mathfrak{V}(\boldsymbol{R})$ of minimal elements of the subposet ${}_{-(B^-)}\mathfrak{V}(\boldsymbol{R})\subset\mathcal{T}\bigl(\mathcal{L}({}_{-(B^-)}\mathcal{M}),
\mathrm{T}^{(+)}\bigr)$ is the inclusion-minimal subset of topes with the property $\sum_{T\in\bmax^+({}_{-(B^-)}\mathfrak{V}(\boldsymbol{R}))}T$ $=\mathrm{T}^{(+)}$, that is,
\begin{equation}
\label{eq:3}
\mathrm{T}^{(+)}=\sum_{T\in\bmin{}_{-(B^-)}\mathfrak{V}(\boldsymbol{R})}T\; .
\end{equation}
This means that $\bmin{}_{-(B^-)}\mathfrak{V}(\boldsymbol{R})$ is a critical tope committee for the acyclic oriented matroid ${}_{-(B^-)}\mathcal{M}$. Relation~(\ref{eq:1}) is equivalent to~(\ref{eq:3}).
\end{proof}

\begin{corollary} If $\boldsymbol{R}$ is a symmetric cycle in the tope graph of the oriented matroid~$\mathcal{M}$, then for any tope $T\in\mathcal{T}$ the corresponding set~$\boldsymbol{Q}(T,\boldsymbol{R})$ is the set
${}_{-(T^-)}\Bigl(\bmax^+\bigl({}_{-(T^-)}\mathfrak{V}(\boldsymbol{R})\bigr)\Bigr)$; therefore
\begin{equation*}
T=\sum_{Q\ \in\ {}_{-(T^-)}\left(\bmax^+\left({}_{-(T^-)}\mathfrak{V}(\boldsymbol{R})\right)\right)}Q\; .
\end{equation*}
\end{corollary}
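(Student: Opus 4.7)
The plan is to reduce the Corollary directly to the Theorem by the specialization $B:=T$. With this choice, the Theorem asserts
\[
\boldsymbol{Q}(T,\boldsymbol{R})=\bmin\mathfrak{V}(\boldsymbol{R}),
\]
where minimality is measured inside the tope poset $\mathcal{T}(\mathcal{L}(\mathcal{M}),T)$. All that remains is to re-express $\bmin\mathfrak{V}(\boldsymbol{R})$ in terms of $\bmax^+$ of the reoriented subposet.

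Next I would transfer the whole situation across the involution ${}_{-(T^-)}$. This map sends $T$ to $\mathrm{T}^{(+)}$ and preserves separation sets, so it defines a poset isomorphism $\mathcal{T}(\mathcal{L}(\mathcal{M}),T)\to\mathcal{T}\bigl(\mathcal{L}({}_{-(T^-)}\mathcal{M}),\mathrm{T}^{(+)}\bigr)$ that restricts to an isomorphism of subposets $\mathfrak{V}(\boldsymbol{R})\to{}_{-(T^-)}\mathfrak{V}(\boldsymbol{R})$. Since a poset isomorphism carries minimal elements to minimal elements, and since ${}_{-(T^-)}$ is an involution, one obtains
\[
\bmin\mathfrak{V}(\boldsymbol{R})={}_{-(T^-)}\bigl(\bmin{}_{-(T^-)}\mathfrak{V}(\boldsymbol{R})\bigr).
\]
Now, inside the acyclic matroid ${}_{-(T^-)}\mathcal{M}$ based at $\mathrm{T}^{(+)}$, the poset rank of a tope $O$ is $|O^-|$; hence minimal elements of the subposet ${}_{-(T^-)}\mathfrak{V}(\boldsymbol{R})$ are exactly those of locally minimal $|O^-|$ along the reoriented symmetric cycle, which is precisely the defining property of $\bmax^+({}_{-(T^-)}\mathfrak{V}(\boldsymbol{R}))$ recalled in the proof of the Theorem (via Prop.~5.6 of~\cite{M-Existence}). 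Thus $\bmin{}_{-(T^-)}\mathfrak{V}(\boldsymbol{R})=\bmax^+({}_{-(T^-)}\mathfrak{V}(\boldsymbol{R}))$, and chaining the three identifications yields the stated formula for $\boldsymbol{Q}(T,\boldsymbol{R})$; the summation identity then follows immediately from~(\ref{eq:2}).

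The only real bookkeeping is to verify that extraction of minimal elements commutes with the reorientation ${}_{-(T^-)}$, which is a direct consequence of its action on separation sets. Since no new combinatorial content beyond the Theorem and the acyclic rank formula is needed, I do not anticipate any deeper obstacle; the Corollary is essentially the Theorem read through the change of base tope $B\leadsto T$.
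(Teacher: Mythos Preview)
Your argument is correct and is exactly the derivation implicit in the paper: the Corollary is the Theorem with $B:=T$, transported through the reorientation ${}_{-(T^-)}$, together with the identification $\bmin{}_{-(T^-)}\mathfrak{V}(\boldsymbol{R})=\bmax^+\bigl({}_{-(T^-)}\mathfrak{V}(\boldsymbol{R})\bigr)$ already recorded inside the proof of the Theorem. One small remark: that last identification is purely definitional once the base is $\mathrm{T}^{(+)}$, since $O'\preceq O''$ there means $(O')^-\subseteq (O'')^-$, i.e.\ $(O')^+\supseteq (O'')^+$; you do not actually need the ``locally minimal $|O^-|$'' description or Prop.~5.6 for this step.
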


\begin{example}
Consider the Hasse diagram, which is depicted in Figure~{\rm\ref{figure:1}(a)}, of the tope poset $\mathcal{T}(\mathcal{L}(\mathcal{M}),B)$ of a simple oriented matroid $\mathcal{M}$ of rank $3$, where $B:=(-1,1,1,1,1)$. Fix the symmetric cycle
$\boldsymbol{R}$ $:=$ $\bigl((1,-1,1,1,1),$ $(1,-1,1,-1,1),$\hfill $(1,-1,-1,-1,1),$\hfill $(1,1,-1,-1,1),$\hfill
$(-1,1,-1,-1,1),$\\ $(-1,1,-1,-1,-1),$\hfill $(-1,1,-1,1,-1),$\hfill $(-1,1,1,1,-1),$\hfill $(-1,-1,1,1,-1),$\\ $(1,-1,1,1,-1),$ $(1,-1,1,1,1)\bigr)$
in the tope graph of $\mathcal{M}$.

The\hfill Hasse\hfill diagram,\hfill borrowed\hfill from\hfill \cite[Fig.~4.2.2]{BLSWZ},\hfill of\hfill the\hfill tope\\ poset~$\mathcal{T}\bigl(\mathcal{L}({}_{-\{1\}}\mathcal{M}),\mathrm{T}^{(+)}\bigr)$ of the acyclic oriented matroid ${}_{-\{1\}}\mathcal{M}$ is shown in~Figure~{\rm\ref{figure:1}(b)}. We have
$\bmin{}_{-(B^-)}\mathfrak{V}(\boldsymbol{R})=\bigl\{(-1,-1,1,1,1),(1,1,-1,-1,1),$ $(1,1,1,1,-1)\bigr\}$ and
\begin{equation*}
\mathrm{T}^{(+)}=\sum_{Q\in\bmin{}_{-(B^-)}\mathfrak{V}(\boldsymbol{R})}Q=(-1,-1,1,1,1)\;+\; (1,1,-1,-1,1)\;+\; (1,1,1,1,-1)\; .
\end{equation*}
Similarly, $\bmin\mathfrak{V}(\boldsymbol{R})=\bigl\{(1,-1,1,1,1),(-1,1,-1,-1,1),(-1,1,1,1,-1)\bigr\}$ and
\begin{equation*}
\begin{split}
B&=\sum_{Q\in\bmin\mathfrak{V}(\boldsymbol{R})}Q=(1,-1,1,1,1)\;+\; (-1,1,-1,-1,1)\;+\; (-1,1,1,1,-1)\\&=(-1,1,1,1,1)\; .
\end{split}
\end{equation*}

\begin{figure}
\includegraphics[viewport=6mm 0mm 140mm 60mm]{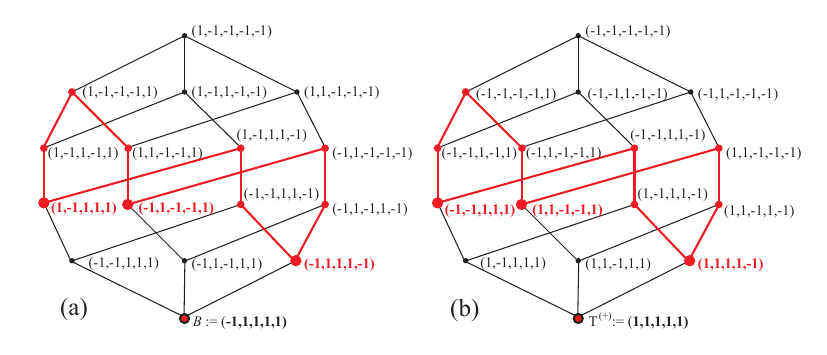}
\caption{(a): The Hasse diagram of the tope poset~$\mathcal{T}(\mathcal{L}(\mathcal{M}),B)$ based at the tope $B:=(-1,1,1,1,1)$, and the \textcolor{red}{subposet} $\mathfrak{V}(\boldsymbol{R})$ which is the vertex set
of a symmetric cycle
$\boldsymbol{R}$
in the tope graph.
(b): The Hasse diagram of the tope poset $\mathcal{T}\bigl(\mathcal{L}({}_{-\{1\}}\mathcal{M}),\mathrm{T}^{(+)}\bigr)$ based at the positive tope $\mathrm{T}^{(+)}:=(1,1,1,1,1)$.}
\label{figure:1}
\end{figure}

\begin{figure}
\includegraphics[viewport=6mm 0mm 140mm 60mm]{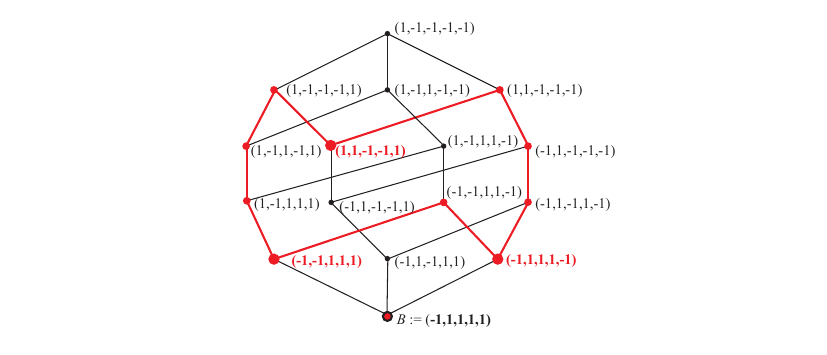}
\caption{The Hasse diagram of the tope poset~$\mathcal{T}(\mathcal{L}(\mathcal{M}),B)$ based at the tope $B:=(-1,1,1,1,1)$, and the \textcolor{red}{subposet} $\mathfrak{V}(\boldsymbol{R})$ which is the vertex set
of a symmetric cycle $\boldsymbol{R}$ in the tope graph, cf.~Figure~\ref{figure:1}(a).}
\label{figure:2}
\end{figure}

\begin{figure}
\includegraphics[viewport=6mm 0mm 140mm 60mm]{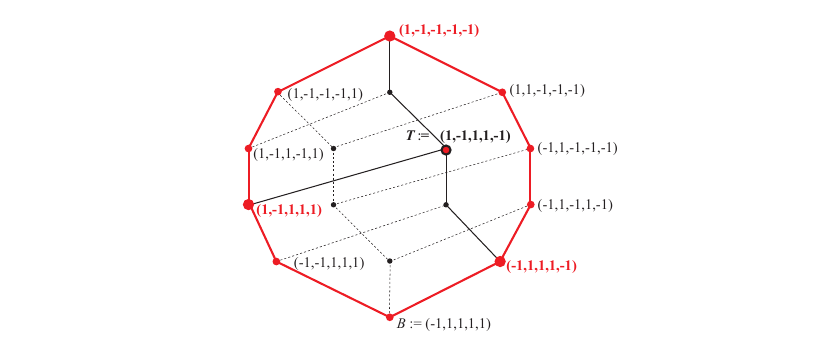}
\caption{The Hasse diagram of the tope poset~$\mathcal{T}(\mathcal{L}(\mathcal{M}),B)$ based at the tope $B:=(-1,1,1,1,1)$, and the \textcolor{red}{subposet} $\mathfrak{V}(\boldsymbol{R})$ which is the vertex set
of a symmetric cycle $\boldsymbol{R}$ in the tope graph, cf.~Figures~\ref{figure:1}(a) and~\ref{figure:2}.}
\label{figure:3}
\end{figure}

The subposet of the poset $\mathcal{T}(\mathcal{L}(\mathcal{M}),B)$, depicted in Figure~{\rm\ref{figure:2}}, is the vertex set of another symmetric cycle $\boldsymbol{R}:=\bigl((-1,-1,1,1,1),(1,-1,1,1,1),$ $(1,-1,1,-1,1),$\hfill $(1,-1,-1,-1,1),$\hfill $(1,1,-1,-1,1),$\hfill
$(1,1,-1,-1,-1),$ $(-1,1,-1,-1,-1),$\hfill $(-1,1,-1,1,-1),$\hfill
$(-1,1,1,1,-1),$\hfill $(-1,-1,1,1,-1),$ \\$(-1,-1,1,1,1)\bigr)$\hfill in\hfill the\hfill tope\hfill graph\hfill of\hfill $\mathcal{M}$.\hfill We\hfill have\hfill
$\bmin\mathfrak{V}(\boldsymbol{R})$ $=\bigl\{(-1,-1,1,1,1),(1,1,-1,-1,1),(-1,1,1,1,-1)\bigr\}$ and
\begin{equation*}
\begin{split}
B&=\sum_{Q\in\bmin\mathfrak{V}(\boldsymbol{R})}Q=(-1,-1,1,1,1)\;+\; (1,1,-1,-1,1)\;+\; (-1,1,1,1,-1)\\&= (-1,1,1,1,1)\; .
\end{split}
\end{equation*}


Let $\boldsymbol{R}:=\bigl((-1,1,1,1,1),(-1,-1,1,1,1),(1,-1,1,1,1),(1,-1,1,-1,1),$ $(1,-1,-1,-1,1),$\hfill
$(1,-1,-1,-1,-1),$\hfill $(1,1,-1,-1,-1),$\hfill $(-1,1,-1,-1,-1),$\\ $(-1,1,-1,1,-1),(-1,1,1,1,-1),(-1,1,1,1,1)\bigr)$ be one more symmetric cycle\hfill in\hfill the\hfill tope\hfill graph\hfill of\hfill the\hfill oriented\hfill  matroid\hfill  $\mathcal{M}$,\hfill see\hfill Figure\hfill {\rm\ref{figure:3}}.\hfill Pick\\ the\hfill tope\hfill $T:=(1,-1,1,1,-1)$\hfill of\hfill $\mathcal{M}$.\hfill
Since\hfill
${}_{-(T^-)}\Bigl(\bmax^+\bigl({}_{-(T^-)}\mathfrak{V}(\boldsymbol{R})\bigr)\Bigr)$\\ $=
\bigl\{(1,-1,1,1,1),(-1,1,1,1,-1),(1,-1,-1,-1,-1)\bigr\}$, we have
\begin{equation*}
\begin{split}
T:&=(1,-1,1,1,-1)=\sum_{Q\ \in\ {}_{-(T^-)}\left(\bmax^+\left({}_{-(T^-)}\mathfrak{V}(\boldsymbol{R})\right)\right)}Q\\&=(1,-1,1,1,1)\;
+\;(-1,1,1,1,-1)\;+\;(1,-1,-1,-1,-1)\; .
\end{split}
\end{equation*}
\end{example}

\end{document}